\documentclass[12pt]{amsart}
\usepackage{amsfonts,amsmath,url}
\usepackage{amsthm}

\usepackage{amssymb}

\newtheorem{theorem}{Theorem}[section]
\newtheorem{prop}[theorem]{Proposition}
\newtheorem{lemma}[theorem]{Lemma}

\theoremstyle{definition}
\newtheorem{defn}{Definition}[section]
\newtheorem{rem}{Remark}[section]
\newtheorem{prob}{Problem}[section]

\newcommand{\grtype}{\mathop{\mbox{$\mathrm{A}$}}}
\newcommand{\partype}{\mathop{\mbox{$\mathrm{B}$}}}

\newcommand{\lcm}{\mathop{\mathrm{lcm}}}

\usepackage{amsthm}



\begin{document}

\title{Super graphs on groups, I}
\author[Arunkumar]{G. Arunkumar}
\address{Indian Institute of Technology, Dharwad, Karnataka, India.}
\email{arun.maths123@gmail.com}

\author[Cameron]{Peter J. Cameron}
\address{School of Mathematics and Statistics, University of St Andrews, Fife,
	UK.}
\email{pjc20@st-andrews.ac.uk}

\author[Nath]{Rajat Kanti Nath}
\address{Tezpur University, Tezpur, Assam, India.}
\email{rajatkantinath@yahoo.com}

\author[Selvaganesh]{Lavanya Selvaganesh}
\address{Indian Institute of Technology, Varanasi, India.}
\email{lavanyas.mat@iitbhu.ac.in}

\thanks{This work began in the Research Discussion on Groups and Graphs, organised by
Ambat Vijayakumar and Aparna Lakshmanan at CUSAT, Kochi, to whom we express
our gratitude. }


\date{}
\maketitle

\begin{abstract}
Let $G$ be a finite group. A number of graphs with the vertex set $G$ have been studied, including
the power graph, enhanced power graph, and commuting graph. These graphs form
a hierarchy under the inclusion of edge sets, and it is useful to study them
together. In addition, several authors have considered modifying the definition
of these graphs by choosing a natural equivalence relation on the group such as
equality, conjugacy, or equal orders, and joining two elements if there are
elements in their equivalence class that are adjacent in the original graph. In this way, we
enlarge the hierarchy into a second dimension. Using the three graph types and three equivalence relations mentioned gives nine graphs, of which in
general only two coincide; we find conditions on the group for some
other pairs to be equal. These often define interesting classes of groups,
such as EPPO groups, $2$-Engel groups, and Dedekind groups.

We study some properties of graphs in this new hierarchy. In particular, we
characterize the groups for which the graphs are complete, and
in most cases, we characterize the dominant vertices (those joined to all
others). Also, we give some results about universality,  perfectness, and clique
number.

The paper ends with some open problems and suggestions for further work.
\end{abstract}

\section{Introduction}

In this section, we describe some graphs associated with groups and discuss
a convention for these graphs as well as notation and terminology. In the
following sections, we prove a number of properties of the collection of graphs
 and show how treating them together can be helpful: we examine conditions
for some pair of these graphs to be equal; in most cases, we characterize the
dominant vertices; we show that some of the graphs are perfect, and examine
universality properties of the others; and we calculate the clique number in
some cases. 


\subsection{B superA graphs on groups}

\begin{defn}
	Let $\grtype$ be a type of graph defined on a group $G$. In this paper,
	we will consider three such types:
	\begin{enumerate}
		\item the \emph{power graph}, in which $g$ is adjacent to $h$ if either $g$ or $h$ is a power
		of the other;
		\item the \emph{enhanced power graph}, in which $g$ is adjacent to $h$ if $g$ and $h$ are both
		powers of an element $k$ (equivalently, if the group $\langle g,h\rangle$
		generated by $g$ and $h$ is cyclic);
		\item the \emph{commuting graph}, in which $g$ is adjacent to $h$ if $gh=hg$ (equivalently,
		if $\langle g,h\rangle$ is abelian).
	\end{enumerate}
\end{defn}

Several other types of graphs can be defined, including the deep commuting graph,
nilpotency graph, solvability graph, and Engel graph; these are described
in the survey paper~\cite{gong}.

Also, let $\partype$ be an equivalence relation defined on $G$. In this paper,
we will consider three equivalence relations;
\begin{enumerate}
\item \emph{equality}, $g\sim h$ if $g=h$;
\item \emph{conjugacy}, $g\sim h$ if $h=x^{-1}gx$ for some $x\in G$;
\item \emph{same order}, $g\sim h$ if $o(g)=o(h)$, where $o(g)$ denotes
the order of $g$.
\end{enumerate}
Other relations could be considered, such as \emph{automorphism conjugacy},
where $g\sim h$ if some automorphism of $G$ maps $g$ to $h$.

\paragraph{Definition} The \emph{$\partype$ super$\grtype$ graph} on $G$ is defined as
follows: Let $[g]$ denote the B-equivalence class of the
element $g$. Now join $g$ and $h$ if and only if there exist $g'\in[g]$
and $h'\in[h]$ such that $g'$ and $h'$ are joined in the $\grtype$-graph on $G$.

\medskip

In principle the graph and equivalence relation are arbitrary, but there are
reasons for choosing them to be preserved by the automorphism group of $G$,
as we will see. This is the case for the examples mentioned above.

\begin{rem}
If we take $\grtype$ to be the power graph and $\partype$ the relation ``same
order'', we do obtain the definition of the superpower
graph of $G$ from~\cite{ls:rdgg}. For suppose that $o(g)\mid o(h)$. Then some power of $h$,
namely $h^{o(h)/o(g)}$, has order $o(g)$; so in the definition we can take
$g'=h^{o(h)/o(g)}$ and $h'=h$. Our naming convention gives this graph the name
``order superpower graph'' of~$G$, which will distinguish this from the
conjugacy superpower graph. Also, Herzog \emph{et al.}~\cite{hlm} have
considered the conjugacy supercommuting graph. Our aim here is to give a
unified treatment of these graphs.
\end{rem}

Our convention would also give the power graph the name ``equality superpower
graph'', but we will simply say ``power graph'', with similar convention for
the other basic graph types.

We see that any result about the
power graph is in principle one of a set of nine results about related graphs.

Note that we have inclusions of the edge sets as follows: the edge set of the
power graph is contained in the edge set of the conjugacy superpower graph,
which is contained in the edge set of the order superpower graph; similarly
for other types of graphs.

Note also that graph parameters such as clique number, chromatic number, and
matching number are monotonic increasing with edge set; independence number
and clique cover number are monotonic decreasing; and increasing the edge
set cannot destroy properties such as being Hamiltonian.

\begin{prop}
\begin{enumerate}
\item Let the equivalence relation $\partype$ be ``same order''. If $g$ and $h$
are joined in the power graph, then for one of them, say $g$, every element
equivalent to $g$ is joined to some element equivalent to $h$.
\item Let $\partype$ be  the conjugacy relation and consider any graph of type
$\grtype$ on the group $G$.  If the graph $\grtype$ is invariant under inner automorphisms of $G$, and
$g$ is joined to $h$ in $\grtype$, then every element of the conjugacy
class of $g$ is joined to some element of the conjugacy class of $h$ in the
conjugacy super$\grtype$ graph on $G$, and \emph{vice versa}. 
\item More generally, let $H$ be a subgroup of the automorphism group of $G$ which acts on $\grtype$ for some graph type $\grtype$. Let $\partype$ be the
equivalence relation induced by the orbit partition of this action. Then $g$
is joined to $h$ in $\grtype$ implies that (in $\partype$ super$\grtype$ graph)
every element equivalent to $g$ is joined to some element equivalent to $h$ under the equivalence relation $\partype$.
\end{enumerate}
\label{p:orbit}
\end{prop}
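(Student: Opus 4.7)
The plan is to take (c) as the general framework, derive (b) as the instance $H = \mathrm{Inn}(G)$, and handle (a) by a separate divisibility argument since the ``same order'' relation is not in general an orbit partition of any automorphism action.

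For (c), the idea is that if $g_1 = \sigma(g)$ for some $\sigma \in H$, then $\sigma$ being a graph automorphism of $\grtype$ must send the edge $\{g,h\}$ to another edge of $\grtype$, namely $\{g_1, \sigma(h)\}$; and $\sigma(h)$ lies in $[h]$ by construction. So $g_1$ is already adjacent to an element of $[h]$ in $\grtype$ itself, and \emph{a fortiori} in the super-$\grtype$ graph. Part (b) is then the specialisation to $H = \mathrm{Inn}(G)$, whose orbits on $G$ are the conjugacy classes; the invariance hypothesis is exactly what is needed to say that $\mathrm{Inn}(G)$ acts on $\grtype$, and the ``vice versa'' is immediate by symmetry in $g$ and $h$.

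For (a), the approach is to exploit the fact that along any edge of the power graph, one endpoint is a power of the other. Taking $g$ to be the endpoint of larger order, so that $h = g^k$ and $o(h)\mid o(g)$, then for any $g_1$ with $o(g_1)=o(g)$ the power $g_1^{o(g)/o(h)}$ automatically has order exactly $o(h)$ and is a power of $g_1$, hence serves as the required element of $[h]$ adjacent to $g_1$ in the power graph. The phrase ``for one of them, say $g$'' records that this cannot be reversed: a small-order element need not lie in any cyclic subgroup of the larger order (witness a reflection in $D_4$).

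None of this is technically difficult; the only substantive point in each part is to identify the right witness — the image $\sigma(h)$ under the group action in (b) and (c), and the power $g_1^{o(g)/o(h)}$ in (a).
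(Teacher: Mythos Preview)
Your proof is correct and follows essentially the same approach as the paper: the paper also proves (c) first by noting that an automorphism $\phi\in H$ carries the edge $\{g,h\}$ to $\{\phi(g),\phi(h)\}$, derives (b) as the special case $H=\mathrm{Inn}(G)$, and handles (a) separately via the same divisibility/power argument you give (referring back to an earlier remark that produces the witness $h'^{o(h')/o(g')}$). Your added observation that ``same order'' is not generally an orbit partition, and your $D_4$ counterexample showing the asymmetry in (a), are not in the paper's proof but are correct and clarifying.
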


\begin{proof}
The first statement was observed in the earlier remark. We prove the third statement from which second the statement follows. Note that  if $\{g,h\}$ is an edge,
then $\{\phi(g),\phi(h)\}$ is an edge for all $\phi \in H$ since $B$ is the equivalence relation induced by the orbit partition.
In particular, for the second statement, note that the conjugacy classes are orbits of the
inner automorphism group of $G$; so, if $\{g,h\}$ is an edge,
then $\{x^{-1}gx,x^{-1}hx\}$ is an edge for all $x\in G$. In this case, the hypothesis holds
for all the three graph types we are considering.
\end{proof}

\begin{rem}
Let us observe another general property. For each of the power graph,
enhanced power graph and commuting graph, if $H$ is a subgroup of $G$, then
$\grtype(H)$ is an induced subgraph of $\grtype(G)$. This holds also for the
order super$\grtype$ graphs, but not in general for the conjugacy
super$\grtype$ graphs since the conjugacy relation can change when we pass
from $G$ to $H$.
\end{rem}

\subsection{A convention about equivalence classes}

Our adjacency rule is ambiguous about whether we join vertices in the same
equivalence class. We now explain how we resolve this and explain the
rationale.

In many of the graphs defined on a group, including all those treated here,
the definition would naturally give us a loop at each vertex; any group 
element is a power of itself, so this holds for the power graph and enhanced
power graph; any element commutes with itself, so this holds for the commuting
graph; and so on. Of course, we prefer graphs not to have loops, so we
silently remove these, even though they make little difference to many
graph-theoretic properties (they make no change to connectivity and diameter,
and simply add the identity matrix to the adjacency matrix). Adopting the
convention that there is a (silent/virtual) loop at each vertex, we find that any
equivalence class of the equivalence relation B will induce a complete
subgraph in the $\partype$ super$\grtype$ graph.

In fact, even without this convention, things would not be very different.
Consider the order super$\grtype$ graph. An element $g$ has the same order as
its inverse, which is joined to it in the power graph, enhanced power graph, or
commuting graph; so as long as the order of $g$ is greater than~$2$, each
order equivalence class will induce a complete graph. This will fail only
for involutions. For the conjugacy relation, things are a bit more
complicated and could be worth investigating. However, as stated, we will
adopt the convention here that every equivalence class of $\partype$ induces a
complete subgraph in the $\partype$ super$\grtype$ graph.

\subsection{Notation}

We have defined a fairly large number of graphs: what notation should we use
to make it easy for the reader to recognize which graph is being discussed
without being altogether too cumbersome?

In \cite{gong}, the second author  proposed a systematic notation for various
graphs on groups: given a group $G$, that paper uses $\mathrm{Pow}(G)$,
$\mathrm{EPow}(G)$, and $\mathrm{Com}(G)$ for the power graph, enhanced power
graph, and commuting graph of $G$. One possibility is to modify these in an
obvious way, so that $\mathrm{CSPow}(G)$ is the conjugacy superpower graph
and $\mathrm{OSPow}(G)$ is the order superpower graph,
with similar terminology for the other graphs.

This notation is a bit cumbersome but is hopefully self-explanatory.

\section{Reducing nine graphs to eight}

We have defined nine graphs, but two of them turn out to be the same.

\begin{theorem}
For any finite group $G$, the order superenhanced power graph of $G$ is equal
to the order supercommuting graph of $G$.
\end{theorem}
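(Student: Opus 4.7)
The plan is to prove equality of edge sets by two containments. The containment "order superenhanced power graph $\subseteq$ order supercommuting graph" is immediate: an edge in the former is witnessed by a pair $g',h'$ with $\langle g',h'\rangle$ cyclic, which forces $g'h'=h'g'$, so the same pair witnesses the edge in the latter. The real content is the reverse containment.

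For that, suppose $g,h$ are joined in the order supercommuting graph. If $o(g)=o(h)$, then $g$ and $h$ lie in the same equivalence class, and by the convention adopted in the previous subsection, equivalence classes induce complete subgraphs in both graphs, so $g$ and $h$ are also joined in the order superenhanced power graph. So assume $o(g)\ne o(h)$. By definition, there exist $g',h'\in G$ with $o(g')=o(g)$, $o(h')=o(h)$, and $g'h'=h'g'$. I would then work inside the finite abelian subgroup $A=\langle g',h'\rangle$.

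The key step is the standard fact that in a finite abelian group, if there are elements of orders $m$ and $n$, then there is an element of order $\lcm(m,n)$. Applying this to $A$ with $m=o(g')$ and $n=o(h')$, I obtain $z\in A$ with $o(z)=k:=\lcm(m,n)$. Then $g'':=z^{k/m}$ has order $m=o(g)$ and $h'':=z^{k/n}$ has order $n=o(h)$, while both lie in the cyclic group $\langle z\rangle$, so $\langle g'',h''\rangle$ is cyclic. Hence $g''$ and $h''$ are joined in the enhanced power graph, and since $o(g'')=o(g)$ and $o(h'')=o(h)$, this witnesses that $g$ and $h$ are joined in the order superenhanced power graph.

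I do not expect a serious obstacle here: the only nontrivial ingredient is the lcm-order lemma for finite abelian groups, which is classical. The main thing to be careful about is the boundary case $o(g)=o(h)$ (handled by the equivalence-class convention) and to make clear that the witness $(g'',h'')$ produced in $A$ is manufactured from $z$ by taking powers, rather than being any preassigned pair.
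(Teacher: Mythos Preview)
Your proof is correct and follows essentially the same route as the paper: both pass to the abelian subgroup $A=\langle g',h'\rangle$, pick an element $z\in A$ whose order dominates $o(g')$ and $o(h')$ (you phrase it via $\lcm$, the paper via the exponent of $A$, which coincide here), and then take suitable powers of $z$ as the new witnesses. Your separate handling of the case $o(g)=o(h)$ is unnecessary---the main argument covers it---but it does no harm.
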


\begin{proof}
We use the basic fact that, if a finite abelian group $G$ has exponent $m$,
then it contains an element of order $m$.
By definition, the graph
$\mathrm{OSEPow}(G)$
is a spanning subgraph of 
$\mathrm{OSCom}(G)$. We have to prove the reverse implication.
So suppose that $\{x,y\}$ is an edge of 
$\mathrm{OSCom}(G)$. By definition, there
exist elements $x'$ and $y'$ such that $o(x)=o(x')$, $o(y)=o(y')$, and 
$x'y'=y'x'$. Then $A=\langle x',y'\rangle$ is abelian. Let $m$ be its exponent,
and $z\in A$ an element of order $m$. Then $o(x')\mid o(z)$ and
$o(y')\mid o(z)$, so there exist elements $x''$ and $y''$ in $A$ with
$o(x'')=o(x')$, $o(y'')=o(y')$, and $x''$ and $y''$ are both powers of $z$.
Then $\{x'',y''\}$ is an edge of the enhanced power graph of $G$, and 
$o(x'')=o(x)$, $o(y'')=o(y)$; so $\{x,y\}$ is an edge of the order
superenhanced power graph. 
\end{proof}

Any two of the remaining eight graphs are unequal for some group $G$.
By Theorem~\ref{t:complete}, the only pairs that need to be
considered are the $\grtype$ graph and the conjugacy super$\grtype$ graph
for each of our three graph types $\grtype$; all of these are settled by the
example $G=S_3$ (the dihedral/symmetric group of order $6$).

One could ask: Is there a group $G$ for which all eight graphs are different?
If so, what is the smallest order of such a group?

A more challenging question would be, for each pair of graph types, to 
determine the groups for which the two types of graphs coincide. This has
been solved for the original three graphs, and is a non-trivial exercise.
The power graph and enhanced power graph are equal if and only if $G$ contains
no subgroup $C_p\times C_q$ for distinct primes $p$ and $q$; this condition
characterizes the so-called \emph{EPPO groups} (elements of prime power order groups),
which were determined by Brandl~\cite{brandl} using earlier work of 
Higman~\cite{higman} and Suzuki~\cite{suzuki}; see also~\cite{cm}. The
enhanced power graph and the
commuting graph are equal if and only if $G$ contains no subgroup 
$C_p\times C_p$ for a prime $p$; this condition is equivalent to saying that
all the Sylow subgroups are cyclic or (for the prime~$2$) generalized
quaternion groups and it is not too difficult to list such groups. Indeed,
all groups with cyclic or generalized quaternion Sylow $2$-subgroups have been
determined; see \cite{bc}.

\medskip

We give two more results along these lines. For the first, recall the
definition of iterated commutators in a group: $[x,y]=x^{-1}y^{-1}xy$ and
\[[x_1,x_2,\ldots,x_{n+1}]=[[x_1,x_2,\ldots,x_n],x_{n+1}]\]
for $n\ge2$. A group $G$ is \emph{nilpotent of class at most $n$} if
$[x_1,\ldots,x_{n+1}]=1$ for all $x_1,\ldots,x_{n+1}\in G$; and a group satisfies
the $n$th \emph{Engel identity}, or is \emph{$n$-Engel}, if $[y,x,\ldots,x]=1$
(with $n$ occurrences of $x$) for all $x,y\in G$. Clearly a group which is
nilpotent of class at most $n$ is $n$-Engel; the converse is false,
but it was shown by Hopkins~\cite{hopkins} and Levi~\cite{levi} independently
that a $2$-Engel group is nilpotent of class at most~$3$.

The following lemma is proved by Korhonen given in a post on \texttt{StackExchange}
~\cite{se}.

\begin{lemma}
The following statements are equivalent for a group $G$.
\begin{enumerate}
\item Every centralizer in $G$ is a normal subgroup.
\item Any two conjugate elements in $G$ commute, ie. $x^g x = x x^g$ for all $x, g \in G$.
\item  $G$ is a $2$-Engel group, ie. $[[x,g],g] = 1$ for all $x, g \in G$.
\end{enumerate}
\end{lemma}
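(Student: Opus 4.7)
The plan is to prove the three-way equivalence via the cycle (a) $\Rightarrow$ (b) $\Leftrightarrow$ (c) $\Rightarrow$ (a). The first implication (a) $\Rightarrow$ (b) is immediate from the definitions: since $x \in C_G(x)$ and $C_G(x)$ is normal by hypothesis, every conjugate $x^g$ lies in $C_G(x)$, which is precisely the assertion that $x$ commutes with $x^g$.

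For (b) $\Leftrightarrow$ (c), I would carry out a short commutator manipulation based on $x^g = x \cdot [x, g]$. The equation $x x^g = x^g x$ rewrites as $x \cdot [x, g] = [x, g] \cdot x$, i.e.\ $[[x, g], x] = 1$; and by swapping the roles of the free variables $x$ and $g$ (and invoking $[g, x] = [x, g]^{-1}$ together with the elementary fact that $u$ commutes with $v$ iff $u^{-1}$ does), this universally quantified identity is equivalent to the 2-Engel identity $[[x, g], g] = 1$, which is (c).

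The substance of the lemma lies in (c) $\Rightarrow$ (a). Given $h \in C_G(x)$ and $g \in G$, I must show $h^g \in C_G(x)$. Writing $h^g = h \cdot [h, g]$ and applying the identity $[ab, c] = [a, c]^b \cdot [b, c]$ with $[h, x] = 1$, I would compute
\[
[h^g, x] \;=\; [h, x]^{[h, g]} \cdot [[h, g], x] \;=\; [[h, g], x],
\]
reducing the problem to showing $[[h, g], x] = 1$ in any 2-Engel group whenever $[h, x] = 1$. For this, I would invoke the Hopkins--Levi cyclic commutator identity $[[a, b], c] = [[b, c], a] = [[c, a], b]$, valid in every 2-Engel group; applied with $(a, b, c) = (h, g, x)$ it yields $[[h, g], x] = [[x, h], g] = [1, g] = 1$, as required.

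The hard part will be justifying the cyclic identity itself. The route I have in mind is: first invoke the theorem of Hopkins and Levi that a 2-Engel group is nilpotent of class at most $3$, so that $\gamma_3(G) \subseteq Z(G)$ and every triple commutator is central; this centrality makes the triple-commutator map $T(a, b, c) = [[a, b], c]$ multiplicative in each of its three arguments; and the 2-Engel identities $[[a, b], a] = [[a, b], b] = 1$ (together with $[[a, a], b] = 1$) ensure that $T$ vanishes whenever two of its arguments coincide. A multilinear map vanishing on diagonals is alternating, so its value is invariant under the even permutation given by a cyclic shift of its arguments, which is the identity we need.
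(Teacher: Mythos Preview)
Your proof is correct. The argument for (a)$\Rightarrow$(b) is identical to the paper's, and your (b)$\Leftrightarrow$(c) via $x^g = x[x,g]$ matches the paper's (b)$\Rightarrow$(c), with the extra direction handled by the variable swap you describe.

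The two proofs diverge at (c)$\Rightarrow$(a). The paper quotes a single identity of Kappe, valid in any $2$-Engel group: $[x,[g,h]] = [[x,g],h]^2$. From this, $[C_G(x),G] \le C_G(x)$ is immediate (take $h \in C_G(x)$, so $[x,h]=1$, hence $[x,[h,g]]=1$). You instead route through the Hopkins--Levi cyclic identity $[[a,b],c] = [[b,c],a] = [[c,a],b]$, which you then justify by invoking class-$\le 3$ nilpotency and running a multilinearity/alternation argument. Both external inputs (Kappe's identity, the cyclic identity) are of comparable depth and either one finishes the implication in one line once granted; the paper's citation is simply more direct.

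One remark on your justification of the cyclic identity: deducing it from Hopkins--Levi nilpotency is logically sound but inverts the usual order of dependence. In Levi's original treatment (and most expositions since), the cyclic identity is established \emph{directly} from the $2$-Engel condition by elementary commutator manipulations, and nilpotency of class $\le 3$ is then deduced as a consequence. So you are using the harder theorem to recover the easier lemma that underlies it. This is not a gap, but if you want a self-contained argument it would be cleaner to prove the cyclic identity directly.
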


\begin{proof}
	
	$(a) \implies (b)$: Consider $x$ in $C_G(x)$, since a normal subgroup is a union of conjugacy classes of its elements, we have $x^g \in C_G(x)$ for all $g \in G$ .
	
	$(b) \implies (c)$: Since $x^g = x[x,g]$, if $x^g$ commutes with $x$, $[x,g]$ also commutes with $x$.
	
	$(c) \implies (a)$: If $[[x,g],g] = 1$ for all $g \in G$, then according to \cite[Lemma 2.2]{kappe} we have $[x, [g,h]] = [[x,g],h]^2$. Therefore $[C_G(x), G] \leq C_G(x)$, which means that $C_G(x)$ is a normal subgroup.
\end{proof}

\begin{theorem}\label{t:com=csc}
Let $G$ be a finite group. Then the following conditions are equivalent:
\begin{enumerate}
\item 
the commuting graph of $G$ is equal to the conjugacy supercommuting graph;
\item
the centralizer of every element of $G$ is a normal subgroup of $G$;
\item
$G$ is a $2$-Engel group.
\end{enumerate}
\end{theorem}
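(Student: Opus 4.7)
The plan is to reduce the theorem to proving the equivalence of (a) and (b), since the preceding lemma already establishes (b) $\iff$ (c). The content of (a) $\iff$ (b) is essentially a bookkeeping exercise with conjugates and centralizers once one rewrites the conjugacy supercommuting edge condition in a usable form.

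First I would unpack the definition: an edge $\{g,h\}$ exists in the conjugacy supercommuting graph precisely when some conjugate of $g$ commutes with some conjugate of $h$. By simultaneously conjugating (replacing $g^a$ and $h^b$ by their common conjugate by $b^{-1}$), this is equivalent to saying some conjugate of $g$ lies in $C_G(h)$, or equivalently some conjugate of $h$ lies in $C_G(g)$. So condition (a) becomes: for all $g,h\in G$, if $g^G\cap C_G(h)\neq\emptyset$ then $g\in C_G(h)$.

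For (b) $\implies$ (a), suppose every centralizer is normal and $\{g,h\}$ is an edge of the conjugacy supercommuting graph. Pick $g'\in g^G$ and $h'\in h^G$ with $g'h'=h'g'$, so $g'\in C_G(h')$. Since $C_G(h')$ is normal, the whole class $g^G=(g')^G$ lies in $C_G(h')$, giving $g\in C_G(h')$, i.e.\ $h'\in C_G(g)$. Applying normality of $C_G(g)$ to $h'$ then places the entire class $h^G=(h')^G$ inside $C_G(g)$, in particular $h\in C_G(g)$, so $\{g,h\}$ is an edge of the commuting graph. For (a) $\implies$ (b), fix $x\in G$, $y\in C_G(x)$ and $z\in G$; I want to show $y^z\in C_G(x)$. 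Conjugating the relation $xy=yx$ by $z$ gives $x^z y^z=y^z x^z$, so $y^z$ commutes with the conjugate $x^z$ of $x$; hence $\{x,y^z\}$ is an edge of the conjugacy supercommuting graph, and (a) forces $xy^z=y^z x$ as desired.

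There is no real obstacle in the argument; the only subtle step is the initial reformulation of a conjugacy supercommuting edge as the intersection condition $g^G\cap C_G(h)\neq\emptyset$, which makes transparent how normality of centralizers is exactly the extra ingredient needed to promote the existence of commuting conjugates to actual commutativity. Combined with the lemma, this completes the chain (a) $\iff$ (b) $\iff$ (c).
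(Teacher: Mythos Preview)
Your proof is correct and follows essentially the same approach as the paper: reduce to (a) $\iff$ (b) via the preceding lemma, then argue that equality of the two graphs is equivalent to each centralizer being closed under conjugation. Your version is a bit more explicit (the reformulation of the edge condition as $g^G\cap C_G(h)\neq\emptyset$, and the two-step use of normality in (b) $\Rightarrow$ (a)), but the ideas coincide with the paper's, which simply says ``the argument reverses'' for the converse.
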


\begin{proof}
First, we show the equivalence of (a) and (b). Suppose the commuting graph
and conjugacy supercommuting graph are equal. If $x$ and $y$ commute, they
are joined in the commuting graph, and so every conjugate of $y$ is joined to
$x$ in the conjugacy supercommuting graph, and hence also in the commuting
graph; thus $x$ commutes with every conjugate of $y$. Hence the centralizer
of $x$ is a union of conjugacy classes, so it is a normal subgroup of $G$. The
argument reverses. The equivalence of (b) and (c) follows from the above lemma.
\end{proof}

For the second result, recall that a \emph{Dedekind group} is a group in which
every subgroup is normal. Dedekind~\cite{dedekind} showed that such a group
is either abelian or of the form $Q_8\times E\times F$, where $Q_8$ is the
quaternion group of order $8$, $E$ an elementary abelian $2$-group, and $F$ an
abelian group of odd order.

\begin{theorem}\label{t:pow=csp}
For a finite group $G$, the following conditions are equivalent:
\begin{enumerate}
\item the power graph of $G$ is equal to the conjugacy superpower graph;
\item the enhanced power graph of $G$ is equal to the conjugacy superenhanced
power graph;
\item $G$ is a Dedekind group.
\end{enumerate}
\end{theorem}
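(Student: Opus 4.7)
The plan is to establish (c)$\Rightarrow$(a), (c)$\Rightarrow$(b), (a)$\Rightarrow$(c) and (b)$\Rightarrow$(c) separately. The key observation driving the forward implications from (c) is that in a Dedekind group every cyclic subgroup $\langle x\rangle$ is normal, so for any conjugate $x'=g^{-1}xg$ one has $x'\in\langle x\rangle$ and, comparing orders, $\langle x'\rangle=\langle x\rangle$. In particular, passing to conjugates never changes the cyclic subgroup generated.

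For (c)$\Rightarrow$(a) I would take an edge $\{x,y\}$ of the conjugacy superpower graph; by definition there exist conjugates $x'\sim x$ and $y'\sim y$ with (say) $y'$ a power of $x'$. Then $\langle y\rangle=\langle y'\rangle\subseteq\langle x'\rangle=\langle x\rangle$, so $y$ is a power of $x$ and $\{x,y\}$ is already an edge of the power graph. For (c)$\Rightarrow$(b), starting from an edge $\{x,y\}$ of the conjugacy superenhanced power graph gives conjugates $x',y'$ with $\langle x',y'\rangle$ cyclic; since $\langle x\rangle$ is normal in $G$, the product $\langle x\rangle\langle y\rangle$ is a subgroup, so
\[\langle x,y\rangle=\langle x\rangle\langle y\rangle=\langle x'\rangle\langle y'\rangle=\langle x',y'\rangle,\]
and therefore $\langle x,y\rangle$ is cyclic, i.e.\ $\{x,y\}$ is an edge of the enhanced power graph.

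For the reverse implications (a)$\Rightarrow$(c) and (b)$\Rightarrow$(c), I would show directly that every cyclic subgroup of $G$ is normal, which is enough because if every cyclic subgroup is normal and $H\le G$, then for any $h\in H$ and $g\in G$ we get $h^g\in\langle h\rangle\subseteq H$. Fix $x,g\in G$ and consider $x$ and $x^g$. They lie in the same conjugacy class, so by the convention that conjugacy classes are cliques in the conjugacy super graphs, $\{x,x^g\}$ is an edge of both the conjugacy superpower graph and the conjugacy superenhanced power graph. The hypothesis (a) (resp.\ (b)) promotes this to an edge of the power graph (resp.\ enhanced power graph), so either one of $x,x^g$ is a power of the other, or $\langle x,x^g\rangle$ is cyclic. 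In either case, because $o(x)=o(x^g)$ and subgroups of a given order in a cyclic group are unique, $\langle x\rangle=\langle x^g\rangle$ and hence $x^g\in\langle x\rangle$, as needed.

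I do not expect a significant obstacle here: the Dedekind structure theorem is not used, only the definition, and the whole argument rests on the single observation that normality of each cyclic subgroup forces conjugates to generate the same group. The only point that requires a little care is the convention that equivalence classes induce cliques in the super graphs, which is precisely what lets us feed the otherwise trivial pairs $\{x,x^g\}$ into the hypotheses (a) and (b) in the converse direction.
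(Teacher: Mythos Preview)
Your proposal is correct and follows essentially the same route as the paper's proof: both directions hinge on the observation that, in the power graph or enhanced power graph, two adjacent elements of the same order generate the same cyclic subgroup, and that in a Dedekind group every cyclic subgroup is normal. The only cosmetic difference is in (c)$\Rightarrow$(b): the paper takes a single cyclic subgroup $C$ containing $x$ and $y$, notes $C\trianglelefteq G$ so $x^G\cup y^G\subseteq C$, and concludes since $C$ is a clique in the enhanced power graph; you instead compute $\langle x,y\rangle=\langle x\rangle\langle y\rangle=\langle x'\rangle\langle y'\rangle=\langle x',y'\rangle$ using normality of the individual factors, which is an equally clean route to the same conclusion.
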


\begin{proof}
We use the fact that conjugate elements have the same order. Let $x^G$ denote
the conjugacy class of $x$ in $G$.

In either the power graph or the enhanced power graph, elements of the same
order which are adjacent generate the same cyclic subgroup. So, if either (a)
or (b) holds, then all elements of $x^G$ generate $\langle x\rangle$. So 
every cyclic subgroup of $G$, and hence every subgroup, is normal; that is,
$G$ is a Dedekind group.

For the converse, if $x$ and $y$ are joined in either the power graph or the
enhanced power graph, then there is a cyclic group $C$ containing $x$ and $y$.
If $G$ is a Dedekind group, then $C$ is normal; so $x^G\cup y^G\subseteq C$.
In the case of the enhanced power graph, $C$ is a clique, so all vertices in
$x^G$ are joined to all vertices in $y^G$, and (b) holds. In the case of the
 power graph, (a) holds because either $o(x)$ divides $o(y)$ or
\emph{vice versa}. 
\end{proof}

\begin{rem}
	There are some other elementary observations. The group $G$ has the property
	``two elements are conjugate if and only if they are equal'' exactly when $G$
	is abelian. So we have that the conjugacy super$\grtype$ graph is equal to the
	$\grtype$ graph if $G$ is abelian, for any graph type $\grtype$. In a similar
	way, the conjugacy super$\grtype$ graph is equal to the order super$\grtype$
	graph if $G$ is a group in which any two elements of the same order are
	conjugate. (There are only three finite groups with this property, the
	symmetric groups of degrees $1$, $2$ and $3$: see Fitzpatrick
	\cite[Theorem 3.6]{fitzp}.) 
\end{rem}

\section{Completeness and dominant vertices}

In this section, we begin a study of how the properties of the graphs relate to
properties of the groups they are built on.

\subsection{When is the $\partype$ super$\grtype$ graph complete?}\label{complete}

The theorem below summarises the answer to this question ``When is the graph
complete?'' for our three types of graph and three types of partition, and is
intended as an example of treating the hierarchy uniformly. In the table,
$(*)$ means that the group $G$ has an element whose order is the exponent $m$
of $G$; equivalently, the \emph{spectrum} of $G$ (the set $\pi^*(G)$ of orders
of elements of $G$, sometimes denoted by $\pi_e(G)$) is
the set of all divisors of $m$. Such groups are not so rare. Any
nilpotent group has this property; and, for any finite group $G$, there is
a positive integer $r$ such that $G^r$ has property $(*)$. For example,
$(A_5)^3$ contains elements of order $30$, which is the exponent of the group.

\begin{theorem}
The following table describes groups whose power graph, enhanced power graph,
commuting graph, or their conjugacy or order supergraph is complete.
\begin{center}
\begin{tabular}{|c||c|c|c|}
\hline
& power graph & enhanced    & commuting \\
&             & power graph & graph \\
\hline\hline
equality & cyclic    & cyclic & abelian \\
      & $p$-group &        &         \\
\hline
conjugacy & cyclic & cyclic & abelian \\
          & $p$-group &     &         \\
\hline
order & $p$-group & $(*)$    & $(*)$  \\
\hline
\end{tabular}
\end{center}
\label{t:complete}
\end{theorem}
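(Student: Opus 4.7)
The plan is to work through the nine entries row by row, using a different idea for each row. The equality row is classical: the commuting graph is complete iff $G$ is abelian by definition; for the enhanced power graph, if $a$ has maximum order then completeness forces every other element into $\langle a\rangle$, so $G$ is cyclic, and cyclic groups are obviously complete in this graph; for the power graph the same argument shows $G$ must be cyclic, and within a cyclic group an edge $\{a,b\}$ requires $o(a)\mid o(b)$ or $o(b)\mid o(a)$, which holds for all pairs of divisors of $|G|$ exactly when $|G|$ is a prime power.

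For the conjugacy row, the unifying observation is that if $a$ has maximum order in $G$ (respectively, $a$ is arbitrary in the commuting case) and $\{a,b\}$ is an edge of the conjugacy super-$\grtype$ graph, then $b$ lies in the union $H^{G}=\bigcup_{g\in G} g^{-1}Hg$ of conjugates of a specific subgroup $H$: take $H=C_G(a)$ in the commuting case and $H=\langle a\rangle$ in the power and enhanced-power cases. For commuting this is immediate from $b^h\in C_G(a^g)=C_G(a)^g$. For the enhanced-power case, the witnesses $a^g$ and $b^h$ both lie in some cyclic subgroup $\langle c\rangle$; since $a^g$ has the maximum element-order of $G$ while $|\langle c\rangle|=o(c)$ is bounded by the same maximum, one concludes $\langle c\rangle=\langle a^g\rangle$, whence $b^h\in\langle a\rangle^g$. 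The power case reduces to the same conclusion (either $b^h$ is a power of $a^g$, or $a^g$ is a power of $b^h$ and maximality of $o(a)$ forces $\langle b^h\rangle=\langle a^g\rangle$). Completeness therefore yields $G=H^{G}$, and Jordan's classical theorem that a finite group is not a union of $G$-conjugates of a proper subgroup forces $H=G$. In the commuting case this gives $a\in Z(G)$ for every $a$, so $G$ is abelian; in the other two it gives $G=\langle a\rangle$, so $G$ is cyclic. Finally, in a cyclic group conjugation is trivial, so the conjugacy super-$\grtype$ graph coincides with the $\grtype$ graph, and the equality-row conclusion applies, adding the $p$-group condition in the power cell.

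For the order row, an edge of the order superpower graph between $a$ and $b$ requires same-order witnesses one of which is a power of the other, so $o(a)\mid o(b)$ or $o(b)\mid o(a)$; completeness therefore forces the spectrum $\pi^*(G)$ to be totally ordered by divisibility, and Cauchy's theorem rules out two distinct prime divisors of $|G|$, so $G$ is a $p$-group. Conversely, in a $p$-group if $o(a)\mid o(b)$ then $b^{o(b)/o(a)}$ has order $o(a)$ and is a power of $b$, supplying the edge. For the two remaining (and equal) cells, an edge of the order superenhanced power graph gives same-order witnesses in a common cyclic subgroup, which contains an element of order $\lcm(o(a),o(b))$; completeness then closes $\pi^*(G)$ under $\lcm$, so the exponent $m$ of $G$ is attained, giving $(*)$. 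The converse is immediate by taking $z$ of order $m$ and the powers $z^{m/o(a)},z^{m/o(b)}\in\langle z\rangle$, which witness both the enhanced-power and commuting edges simultaneously.

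The main obstacle is the conjugacy row: one has to pick the correct carrier subgroup $H$ and use the maximality of $o(a)$ carefully to pin the common cyclic hull (or $\langle b^h\rangle$) down to $\langle a^g\rangle$ before invoking Jordan's lemma. Once the union-of-conjugates reduction is in place, the rest of the argument is uniform across the three conjugacy cells, and the equality and order rows are straightforward once the right witnesses are exhibited.
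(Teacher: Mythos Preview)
Your proof is correct. The equality and order rows match the paper's reasoning closely (you actually spell out the equality row, which the paper merely cites as well-known). The real difference is in the conjugacy row.

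For the conjugacy supercommuting graph the paper and you do the same thing: Jordan's theorem applied to $C_G(a)$. But for the other two conjugacy cells the paper takes a different route. For the conjugacy superenhanced power graph it argues by hierarchy: completeness there implies completeness of the conjugacy supercommuting graph, hence $G$ is abelian, hence conjugacy is trivial and one is back to the enhanced power graph itself. For the conjugacy superpower graph the paper first observes that $G$ must be a $p$-group, then picks a central element $g$ of order $p$, shows it cannot be joined to any element of order $p$ outside $\langle g\rangle$, concludes $G$ has a unique subgroup of order $p$, invokes Burnside's theorem to get $G$ cyclic or generalized quaternion, and finally rules out generalized quaternion by exhibiting non-conjugate subgroups of order $4$.

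Your argument is more uniform and more elementary: in all three cases you exhibit a subgroup $H$ (either $C_G(a)$ or $\langle a\rangle$ with $a$ of maximum order) whose conjugates must cover $G$ if the graph is complete, and then Jordan's theorem forces $H=G$. In particular you avoid Burnside's classification of $p$-groups with a unique subgroup of order $p$ and the separate exclusion of the generalized quaternion case. The paper's approach, on the other hand, illustrates the use of the hierarchy among the graphs and ties in with the structure theory of $p$-groups; yours trades that for a single clean lemma applied three times.
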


\begin{proof}
The results for the power graph, enhanced power graph, and commuting graph
are well-known \cite{gong}.

To prove that the conjugacy supercommuting graph is complete if and only if
$G$ is abelian, we use a result which goes back to Jordan~\cite{jordan}: if
$G$ is a finite group and $H$ a proper subgroup of $G$, then there is a
conjugacy class in $G$ which is disjoint from $H$. (For the reader's convenience
we sketch a proof. Let $H$ have index $n$ in $G$, and consider the action of
$G$ by right multiplication on the set of right cosets of $H$. This action is
transitive, so by the Orbit-counting Lemma (sometimes called Burnside's Lemma),
the average number of fixed points of the elements of $G$ is~$1$. But the
identity fixes $n$ points, and $n>1$; so some element $g$ fixes no point. This
means that $g$ lies in no conjugate of $H$; equivalently, no conjugate of $g$
lies in $H$. For a modern take on Jordan's theorem, we strongly recommend a
paper of Serre~\cite{serre}.)

Now, if $G$ is abelian, then the conjugacy
supercommuting graph coincides with the commuting graph, and is complete.
So suppose that $G$ is a finite group whose conjugacy supercommuting graph is
complete, and take any element $g\in G$. If $C_G(g)\ne G$ then, by Jordan's
theorem, there is an element $h$ such that the conjugacy class of $h$ is
disjoint from $C_G(g)$; thus no conjugate of $h$ commutes with $g$, and
so $g$ and $h$ are non-adjacent, a contradiction. Thus $C_G(g)=G$, or
$g\in Z(G)$. Since this holds for all $g\in G$, we see that $G$ is abelian.

If $G$ is not a $p$-group, then it contains elements of distinct prime
orders. These elements are non-adjacent in the power graph and both of its
supergraphs. So if any of these graphs are complete, then $G$ must be a
$p$-group. Conversely, if $G$ is a $p$-group, its order superpower graph is
complete, as shown in~\cite{ls:rdgg}.

If $G$ is a cyclic $p$-group, then its power graph, and hence its conjugacy
superpower graph, is complete. Suppose conversely that $G$ is a group whose
conjugacy superpower graph is complete. Then $G$ cannot have elements of 
distinct prime order, so $G$ is a $p$-group. Let $g$ be an element of
order $p$ in $Z(G)$. Then $G$ is conjugate only to itself, so cannot be
joined to any element of order $p$ outside $\langle g\rangle$; so there can
be no such elements. Thus $G$ has a unique subgroup of order $p$, and by a
result of Burnside~\cite[Sections 104--105]{burnside} it is cyclic or
generalized quaternion. But generalized quaternion groups contain
non-conjugate subgroups of order $4$, so cannot arise here.

If $G$ is cyclic, then its enhanced power graph, and hence its conjugacy
superenhanced power graph, is complete. Suppose conversely that $G$ is a group
whose conjugacy superenhanced power graph is complete. Then the conjugacy
supercommuting graph of $G$ is complete, so $G$ is abelian. Then the conjugacy
superenhanced power graph coincides with the enhanced power graph, so $G$ is
cyclic.

Finally, let $G$ be a group whose order
supercommuting graph is complete. Take two elements of $G$, with orders
(say) $g$ and $h$. Since $g$ and $h$ are adjacent, we can replace them with
elements of the same orders which commute, and so the
order of their product is the least common multiple of $k$ and $l$. Thus
the set $\pi_e(G)$ is closed under taking least common multiples (as well as
under taking divisors), and so $(*)$ holds. Conversely, if $g\in G$ has order
equal to the exponent of $G$, then every element in $\pi_e(G)$ is the order
of some power of $g$, and all these powers are joined in the enhanced power
graph and in the commuting graph. So the order supercommuting graph is
complete. 
\end{proof}

\subsection{Dominant vertices}

A graph is complete if and only if every vertex is \emph{dominant} (or
\emph{universal}), that is, joined to all other vertices. So, as a
generalization of Theorem~\ref{t:complete}, we could ask: for each of the nine
graphs, which elements of a group $G$ are dominant vertices? The answers are
known for the basic graphs, and can be found in \cite[Section 9.1]{gong}; we
summarise the results here.
\begin{enumerate}
\item The set of dominant vertices of the power graph of $G$ is the whole of
$G$, if $G$ is a cyclic $p$-group; the identity and the generators of $G$, if
$G$ is cyclic but not a $p$-group; the centre, if $G$ is a generalized 
quaternion group; and only the identity in all other cases.
\item The set of dominant vertices in the enhanced power graph is a cyclic
subgroup of $Z(G)$ called the \emph{cyclicizer} of $G$; it is the product of
the Sylow $p$-subgroups of $Z(G)$ for those primes $p$ for which a Sylow
$p$-subgroup of $G$ is cyclic or generalized quaternion.
\item The set of dominant vertices in the commuting graph is the centre
$Z(G)$.
\end{enumerate}

Now we solve the problem for the conjugacy supergraphs.

\begin{theorem}
If $\grtype$ is the power graph, enhanced power graph, or commuting graph,
then the set of dominant vertices in the conjugacy super$\grtype$ graph of
$G$ is the same as the set of dominant vertices in the $\grtype$ graph.
\end{theorem}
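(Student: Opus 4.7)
The plan. The easy direction is immediate: every edge of the $\grtype$ graph is an edge of the conjugacy super$\grtype$ graph, so a vertex that is dominant in the former is automatically dominant in the latter. For the reverse direction I would prove all three cases by first reducing to the commuting graph and then translating back through the centre. Specifically, the conjugacy superpower graph and the conjugacy superenhanced power graph are both spanning subgraphs of the conjugacy supercommuting graph (from the edge-set hierarchy recorded in Section~1), so a vertex $x$ which is dominant in the conjugacy super$\grtype$ graph is, for any of the three choices of $\grtype$, in particular dominant in the conjugacy supercommuting graph.

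The second step handles the commuting case and supplies centrality for the other two. If $x$ is dominant in the conjugacy supercommuting graph, then for every $y\in G$ some conjugate of $y$ centralizes $x$; equivalently, every conjugacy class of $G$ meets the subgroup $C_G(x)$. Jordan's theorem, applied exactly as in the proof of Theorem~\ref{t:complete}, then forces $C_G(x)=G$, so $x\in Z(G)$. This already finishes the commuting case and yields the key fact that $x$ is central in the remaining cases.

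For the power and enhanced power cases, given such a central $x$ and any $y\in G$, note that $x^G=\{x\}$, so there is a conjugate $y'=h^{-1}yh$ with $\{x,y'\}$ an edge of $\grtype$. For the power graph, either $x=(y')^k=h^{-1}y^kh$ or $y'=x^k$; since $hxh^{-1}=x$ these collapse to $y^k=x$ or $y=x^k$, so $\{x,y\}$ is an edge of the power graph. For the enhanced power graph, $\langle x,y'\rangle$ is cyclic, and conjugating by $h$ yields the cyclic subgroup $\langle hxh^{-1},hy'h^{-1}\rangle=\langle x,y\rangle$, so $\{x,y\}$ is an edge of the enhanced power graph. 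The main obstacle, I expect, is not technical depth but clean uniform bookkeeping: once the Jordan-type reduction to $x\in Z(G)$ is in hand, the observation that conjugation by $h$ fixes $x$ does all the remaining work across the three graph types.
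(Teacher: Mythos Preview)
Your proposal is correct and follows essentially the same route as the paper: reduce the power/enhanced power cases to the commuting case via the edge-set hierarchy, apply Jordan's theorem to force $C_G(x)=G$, and then exploit $x\in Z(G)$ to pull back from the conjugacy super$\grtype$ graph to $\grtype$ itself. The only cosmetic difference is in the last step: the paper phrases it as ``$h$ is joined in $\grtype$ to some conjugate of $g$, but the only conjugate of $g$ is $g$,'' whereas you take the symmetric viewpoint ``$x$ is joined in $\grtype$ to some conjugate $y'$ of $y$'' and then conjugate the edge $\{x,y'\}$ back to $\{x,y\}$ using invariance of $\grtype$ under inner automorphisms---same content, slightly more explicit bookkeeping.
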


\begin{proof}
We show this first for the commuting graph. Suppose that $g\in G$ and $g$ is
joined to all other vertices of $G$ in the conjugacy supercommuting graph.
By Proposition~\ref{p:orbit}, $g$ is joined to an element of every conjugacy
class of $G$; in other words, its centralizer $C_G(g)$ meets every conjugacy
class. Hence $C_G(g)=G$, so $g\in Z(G)$. Thus $g$ is joined to all other
vertices in the commuting graph.

Now suppose that $\grtype(G)$ is the power graph or enhanced power graph of $G$,
and let $g$ be a dominant vertex in the conjugacy super$\grtype$ graph of $G$.
Since the conjugacy super$\grtype$ graph is a spanning subgraph of the
conjugacy supercommuting graph, $g\in Z(G)$. So, for any $h\in G$, $h$ is joined
to a conjugate of $g$. But the only conjugate is $g$ itself; so $g$ is joined
to all other vertices in $\grtype(G)$.
\end{proof}

We have also solved the problem for the order superpower graph. If $G$ has
prime power order then its order superpower graph is complete; so we can
suppose not.

\begin{prop}
Let $G$ be a group not of prime power order, having exponent $m$. Then the
set of dominant vertices in the order superpower graph of $G$ consists of the
identity and the elements of order $m$ (if any).
\end{prop}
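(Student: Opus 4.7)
The plan is to translate dominance in the order superpower graph into a purely arithmetic condition on orders, verify the two obvious families of dominant vertices, and then rule out every other order by exhibiting a non-adjacent partner.

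The starting observation is that in the order superpower graph two vertices $g,h$ are adjacent if and only if $o(g)\mid o(h)$ or $o(h)\mid o(g)$: one direction is the computation already given in the opening Remark (take $h'=h$ and $g'=h^{o(h)/o(g)}$), while the other is immediate from the definitions, since a power of $h'$ has order dividing $o(h')$. Consequently a vertex $g$ is dominant if and only if $o(g)$ is comparable under divisibility to $o(h)$ for every $h\in G$. This immediately shows that the identity (order $1$ divides everything) and any element of order $m$ (all orders in $G$ divide $m$) are dominant.

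For the converse, assume $g\in G$ is dominant with $n:=o(g)>1$, and suppose for contradiction that $n$ is a proper divisor of $m$. I would split into two cases according to which primes of $m$ divide $n$. If some prime $r$ divides $m$ but not $n$, take an element of $G$ whose order is divisible by $r$ and raise it to a suitable power to obtain $h\in G$ of order $r$; then $r\nmid n$, and since $n>1$ has a prime factor different from $r$ we also have $n\nmid r$, so $g\not\sim h$, contradicting dominance. Otherwise every prime dividing $m$ already divides $n$; since $n\ne m$, some prime $r$ satisfies $v_r(n)<v_r(m)=:b$, and since the $r$-part of the exponent is always realised as an element order, $G$ contains $h$ with $o(h)=r^b$. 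By the choice of $b$ we have $r^b\nmid n$, and if $n$ has any prime factor other than $r$ then also $n\nmid r^b$, yielding $g\not\sim h$.

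The remaining subcase, in which $n$ is a pure $r$-power, is precisely where the hypothesis that $G$ is not of prime power order must be used: if $n=r^a$ and every prime of $m$ divides $n$, then $m$ itself is an $r$-power, so every element of $G$ has $r$-power order, forcing $G$ to be an $r$-group, contrary to assumption. This prime-power subcase is the only subtle point in the argument; everything else is routine bookkeeping with the divisibility characterization of adjacency.
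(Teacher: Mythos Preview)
Your proof is correct and follows essentially the same approach as the paper: both reduce dominance to divisibility comparability of orders and then test a candidate order $n$ against the maximal prime-power parts $p_i^{a_i}$ of the exponent to force $n=m$. The paper's organisation is slightly more streamlined---it argues directly that for each $i$ one must have $p_i^{a_i}\mid n$ (since $n\mid p_i^{a_i}$ for some $i$ would make $n$ a $p_i$-power, incompatible with comparability at a different prime $p_j$)---whereas you split into the cases ``some prime of $m$ misses $n$'' versus ``all primes of $m$ divide $n$''; but the test elements and the logic are the same.
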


\begin{proof}
Let $p_1,\ldots,p_r$ be the prime divisors of $|G|$ with $r>1$, and let
$p_i^{a_i}$ be the largest power of $p^i$ dividing the order of an element of
$G$; then there are elements of order $p_i^{a_i}$ in $G$. Suppose that $n$ has
the property that elements of order $n$ are dominant, and that $n>1$. Then,
for each $i$, either $p_i^{a_i}$ divides $n$, or $n$ divides $p_i^{a_i}$.
Since $r>1$, the second cannot hold. (If, say, $n$ divides $p_1^{a_1}$, then
$n$ is a proper power of $p_1$; then neither $p_2^{a_2}\mid n$ or
$n\mid p_2^{a_2}$ can hold.) Thus $n$ is the product of the prime powers
$p_i^{a_i}$, which is equal to the exponent of $G$.

Conversely, if $g$ has order $m$, the exponent of $G$, then $o(h)\mid o(g)$
for all $h\in G$, so $g$ is dominant.
\end{proof}

We have not found a characterisation of the dominant vertices in the
(one) remaining case, the order supercommuting graph.

\paragraph{Remark} If a graph has a dominant vertex, then it is connected,
with diameter at most~$2$. So it is customary to remove the dominant vertices
in order to get non-trivial questions about connectedness and diameter. This
is one reason why it is important to know such vertices. This remark suggests
that a next step in the investigation of these graphs would be to decide about the
connectedness of the ``reduced'' graphs.

\section{Some graph properties and parameters}

In this section, we discuss several further graph properties (perfectness,
universality) and parameters (clique number) for our graphs.

\subsection{Perfectness and universality}

It is known that power graphs of finite groups are perfect, but enhanced power
graphs and commuting graphs are not necessarily perfect; indeed, any finite
graph can be embedded as an induced subgraph in the enhanced power graph (or
the commuting graph) of a finite group (see~\cite{gong}). In this section we
give some similar results for supergraphs.

\begin{theorem}
The conjugacy or order superpower graph of a finite group is the comparability
graph of a finite partial preorder and hence is perfect.
\end{theorem}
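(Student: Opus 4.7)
My plan is to exhibit explicit preorders on $G$ whose comparability graphs are exactly the conjugacy and order superpower graphs, and then appeal to the classical fact that comparability graphs of finite (pre)orders are perfect.

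For the order superpower graph, I would define $x \preceq_O y$ iff $o(x)\mid o(y)$. Reflexivity and transitivity are immediate. To identify the graph with the comparability graph: if $x$ and $y$ are adjacent there exist $x',y'$ with $o(x')=o(x)$, $o(y')=o(y)$, and (say) $x'$ a power of $y'$, forcing $o(x)\mid o(y)$; conversely, if $o(x)\mid o(y)$, then $x'=y^{o(y)/o(x)}$ has order $o(x)$ and is a power of $y$, so $\{x,y\}$ is an edge. For the conjugacy superpower graph, I would define $x \preceq_C y$ iff some $G$-conjugate of $x$ lies in $\langle y\rangle$, equivalently $\langle x\rangle$ is conjugate to a subgroup of $\langle y\rangle$. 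Reflexivity is trivial. Transitivity is the one place where one must be a touch careful: if $g^{-1}xg=y^k\in\langle y\rangle$ and $h^{-1}yh\in\langle z\rangle$, then $(gh)^{-1}x(gh)=h^{-1}y^k h=(h^{-1}yh)^k\in\langle z\rangle$. The adjacency translation is: $x,y$ are joined iff some conjugate of $x$ is a power of some conjugate of $y$ or \emph{vice versa}; conjugating the whole relation $x'=u^{-1}xu\in\langle v^{-1}yv\rangle$ by $v$ moves $x$ into a conjugate lying in $\langle y\rangle$, giving $x\preceq_C y$, and the converse direction is immediate by taking $y'=y$.

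Having identified each of the two graphs as the comparability graph of a preorder, I would finish by invoking that comparability graphs of finite partial orders are perfect (Berge/Mirsky). To pass from partial orders to preorders, note that the equivalence relation $x\equiv y$ iff $x\preceq y\preceq x$ induces a partial order on the quotient $G/{\equiv}$, whose comparability graph is perfect; the comparability graph of the original preorder is obtained from this by replacing each vertex by a clique indexed by the corresponding equivalence class (since any two $\equiv$-related elements are mutually $\preceq$-comparable and hence adjacent). Clique-substitution into a perfect graph is perfect by the replacement (substitution) lemma for perfect graphs, so we conclude.

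I expect the only real obstacle to be getting the definition of $\preceq_C$ right and checking its transitivity together with the correct translation to adjacency in the conjugacy superpower graph; once the preorder viewpoint is in place, perfectness is a citation.
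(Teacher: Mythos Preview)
Your proposal is correct and follows essentially the same approach as the paper: define an explicit preorder (divisibility of orders for the order case; ``a conjugate of $x$ lies in $\langle y\rangle$'' for the conjugacy case) whose comparability graph is the supergraph in question, and then invoke perfectness of comparability graphs. You supply considerably more detail than the paper does---in particular the transitivity verification for $\preceq_C$, the adjacency translation, and the preorder-to-order reduction via clique substitution---all of which the paper leaves to the reader.
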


\begin{proof}
For this, we define directed versions of these graphs and show that they are
comparability graphs. For conjugacy, we put an arc from $x$ to $y$ if some
conjugate of $y$ is a power of $x$ (or equivalently if $y$ is a power of some
conjugate of $x$); for order, we put an arc from $x$ to $y$ if $o(y)\mid o(x)$.
Both are reflexive (if we add loops), symmetric and transitive. 
\end{proof}

\begin{theorem}
Every finite graph $\Gamma$ is embeddable as an induced subgraph in the
conjugacy superenhanced power graph, and in the conjugacy supercommuting
graph, of some finite group.
\end{theorem}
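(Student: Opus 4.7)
The plan is to prove the two embedding claims separately by direct constructions, exploiting the coordinatewise behaviour of commuting and conjugacy in direct products.

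For the conjugacy superenhanced power graph, I would work inside a finite abelian group $A$: there conjugacy classes are singletons, so the conjugacy superenhanced power graph and the enhanced power graph of $A$ coincide, and it is enough to embed $\Gamma$ in the latter. Given $\Gamma=(V,E)$, I would let $A=\bigoplus_{\{u,w\}\notin E}(\mathbb{Z}/p_{uw}\mathbb{Z})^2$ with a distinct prime $p_{uw}$ for each non-edge, and set $g_v$ to be $(1,0)$ in the $\{u,w\}$-factor when $v=u$, $(0,1)$ when $v=w$, and zero otherwise. Using distinct primes guarantees (via the Chinese remainder theorem) that $\langle g_v,g_{v'}\rangle$ is cyclic iff every coordinate projection is, and this fails only in the $\{v,v'\}$-coordinate when $\{v,v'\}$ is a non-edge. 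A small auxiliary cyclic tag (one fresh prime per vertex) can be thrown in to ensure the $g_v$ are pairwise distinct without disturbing the cyclicity analysis.

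For the conjugacy supercommuting graph I need a non-abelian group, since the conjugacy supercommuting graph of an abelian group equals its commuting graph, which is complete. The plan is to take $G$ to be a direct product of one copy of $D_4$ for each non-edge of $\Gamma$ (again together with an abelian tag). In $D_4=\langle r,s\mid r^4=s^2=1,\ srs=r^{-1}\rangle$, the reflections $s$ and $rs$ lie in the two distinct conjugacy classes $\{s,r^2s\}$ and $\{rs,r^3s\}$, and a short computation gives $C_{D_4}(s)=\{e,r^2,s,r^2s\}$, which is disjoint from $(rs)^{D_4}$. In the $\{u,w\}$-factor I place $s$ in the $u$-coordinate, $rs$ in the $w$-coordinate, and the identity elsewhere. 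Since commuting with a conjugate of $g_{v'}$ is checked coordinatewise, and only the $\{v,v'\}$-coordinate contains two reflections, the defining property of $D_4$ recorded above gives exactly the adjacencies we want.

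The main obstacle is choosing the non-abelian factor in the second construction. A naive attempt with $D_3=S_3$ fails because its three reflections form a single conjugacy class, so the two reflections assigned to the endpoints of a non-edge become adjacent in the conjugacy supercommuting graph via conjugation. The key insight is to pick a group like $D_4$ in which the two chosen elements lie in distinct classes whose centralizers avoid each other's classes; once this is arranged, the coordinatewise verifications (commuting structure, conjugacy classes, and distinctness of the $g_v$) are routine.
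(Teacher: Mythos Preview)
Your argument is correct. The abelian construction you use for the conjugacy superenhanced power graph is exactly the route the paper acknowledges (it cites the abelian construction from \cite{gong} and notes that conjugacy collapses to equality there). For the conjugacy supercommuting graph, however, the paper takes a different and unified path: it chooses primes $p_1,\dots,p_n$ with $p_i\mid p_j-1$ whenever $p_i<p_j$ (via Dirichlet), and for each non-edge $\{i,j\}$ uses the non-abelian group of order $p_ip_j$ (times cyclic factors of the remaining prime orders); the vertex elements then have pairwise distinct prime orders, so adjacency in the enhanced power graph and in the commuting graph coincide on them, and a single direct product handles both graph types at once. What your approach buys is elementarity: the $D_4$ gadget avoids Dirichlet entirely and the verification that the two reflection classes in $D_4$ have disjoint centralizers is a two-line computation. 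What the paper's approach buys is uniformity: the same group and the same set of elements realise $\Gamma$ simultaneously in both supergraphs, and the distinct-prime-order trick makes the distinctness tag unnecessary.
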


\begin{proof}
The proof in \cite[Theorem 5.5]{gong} of the analogous result for the enhanced
power graph constructs an abelian group, where conjugacy coincides with
equality, proving the result for this case. Here, we give a different proof,
which works for both graph types. We use the fact that two elements of 
distinct prime orders are joined in the enhanced power graph if and only if
they are joined in the commuting graph. (One way round is trivial since the
enhanced power graph is a subgraph of the commuting graph. In the other
direction, if $g$ and $h$ have distinct prime orders and commute, then both
are powers of $gh$.) Now, if $\Gamma$ is a complete graph on $n$ vertices, then
we can take $G$ to be the direct product of cyclic groups of distinct prime
orders $p_1,\ldots,p_n$; if $X$ consists of one element of each prime order,
then the induced subgraph of the commuting graph of $G$ on $X$ is $\Gamma$.
So we may assume that $\Gamma$ is not complete.

First, we observe that it is possible to find a set of $n$ prime numbers
$p_1,\ldots,p_n$ such that, if $i\ne j$ and $p_i<p_j$, then $p_i\mid p_j-1$.
This is proved by induction. Suppose that $p_1,\ldots,p_{n-1}$ have been
chosen. Then we choose $p_n$ to be congruent to $1$ mod $p_i$ for 
$i=1,\ldots,n-1$ (this is possible by the Chinese remainder theorem) and
to be prime (this is possible by Dirichlet's theorem on primes in arithmetic
progression).

Now for $i,j\in\{1,\ldots,n\}$, let $G_{ij}$ be the direct product of the
non-abelian group of order $p_ip_j$ and the cyclic group of order $p_k$ for
every $k\notin\{i,j\}$; let $x_{ijk}$ be an element of order $p_k$ in
$G_{ij}$ for $k=1,\ldots,n$. We note that the induced subgraph of the
commuting graph of $G_{ij}$ on $\{x_{ij1},\ldots,x_{ijn}\}$ is the complete
graph $K_n$ with the edge $\{i,j\}$ deleted.

Given a graph $\Gamma$ with vertex set $\{1,\ldots,n\}$, let $G$ be the
direct product of the groups $G_{ij}$ over all \emph{nonedges} $\{i,j\}$
of $G$; let $x_k$ be the element of $G$ which projects onto $x_{ijk}$ in the
factor $G_{ij}$ for all such pairs $\{i,j\}$. Then the element $x_k$ has
order $p_k$, and $x_k$ and $x_l$ commute if and only if $x_{ijk}$ and
$x_{ijl}$ commute for all $\{i,j\}$, that is, $\{k,l\}$ is an edge of $\Gamma$.

Finally, we note that, if two elements commute, they are joined in the
conjucacy supercommuting graph. Conversely, if $x_j$ and $x_k$ do not commute,
then they project onto non-commuting elements in $G_{jk}$; the structure of
the non-abelian group of order $p_jp_k$ shows that conjugates of these elements
also do not commute.
\end{proof}

\subsection{Clique number}

We describe the maximal cliques in the order superpower and superenhanced
power graphs.

We begin with some definitions. The \emph{spectrum} $\\pi^*(G)$ of a finite
group $G$ (cf. Section \ref{complete}) is closed
under divisibility (if $k\in\pi^*(G)$ and $l\mid k$ then $l\in\pi^*(G)$),
so $\pi^*(G)$ is determined by the set $\pi^*_{\max}(G)$ of its elements
which are maximal in the divisibility partial order.

A sequence $(m_1,m_2,\ldots,m_r)$ of distinct positive integers is a
\emph{chain} in the divisibility partial order if $m_i\mid m_{i+1}$ for
$i=1,\ldots,r-1$. It is a \emph{maximal chain} if $m_1=1$ and $m_{i+1}/m_i$
is prime for $i=1,\ldots,r-1$. The \emph{top} of the chain is $m_r$.

For a finite group $G$ and $m\in\pi^*(G)$, we let $G(m)$ denote the set of
all elements of order $m$ in $G$.

\begin{theorem}
\begin{enumerate}
\item A maximal clique in the order superpower graph has the form 
$G(m_1)\cup G(m_2)\cup\cdots\cup G(m_r)$ for some maximal chain whose top
belongs to $\pi^*_{\max}(G)$.
\item A maximal clique in the order superenhanced power graph has the form
$\displaystyle{\bigcup_{r\mid m}G(r)}$, for some $m\in\pi^*_{\max}(G)$.
\end{enumerate}
\end{theorem}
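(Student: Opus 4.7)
My plan rests on the observation that adjacency in both graphs depends only on the orders of the two elements. In the order superpower graph, $x\sim y$ iff $o(x)\mid o(y)$ or $o(y)\mid o(x)$, as noted in the remark following the definition of super$\grtype$ graphs. In the order superenhanced power graph, which coincides with the order supercommuting graph by the first theorem of Section~2, one has $x\sim y$ iff $\lcm(o(x),o(y))\in\pi^*(G)$: an element of that lcm-order generates a cyclic subgroup witnessing adjacency, while conversely any witnessing cyclic subgroup must have order divisible by $\lcm(o(x),o(y))$. Combined with the convention that equivalence classes induce complete subgraphs, every clique in either graph is automatically a union of order-classes $G(n)$, so cliques correspond bijectively to subsets $S\subseteq\pi^*(G)$ satisfying an appropriate combinatorial closure condition.

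For part (a), the clique condition on $S$ is simply that $S$ is a chain in the divisibility order. To characterise maximal cliques I would verify three things: (i) $1\in S$, since $1$ is divisibility-comparable with everything and can always be prepended; (ii) each consecutive ratio $m_{i+1}/m_i$ is prime, because a composite ratio would permit insertion of an intermediate element of $\pi^*(G)$ via divisor-closedness of $\pi^*(G)$; and (iii) the top $m_r$ lies in $\pi^*_{\max}(G)$, otherwise one could extend upwards by a proper multiple lying in $\pi^*(G)$. Together these three conditions match exactly the definition of a maximal chain whose top is in $\pi^*_{\max}(G)$.

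For part (b), the clique condition on $S$ is that every pairwise LCM of elements of $S$ lies in $\pi^*(G)$. The easy half is immediate: if $m\in\pi^*_{\max}(G)$ and $S=\{r:r\mid m\}$, then all pairwise LCMs divide $m$ and so lie in $\pi^*(G)$, and the clique is maximal, since any further order $n\notin S$ would satisfy $n\nmid m$, forcing $\lcm(n,m)>m$ to lie in $\pi^*(G)$, contradicting the maximality of $m$.

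The converse is where I expect the main obstacle. Given a maximal clique with order-set $S$, the natural plan is to let $m=\lcm(S)$, prove that $m\in\pi^*(G)$, and then conclude from maximality that $S=\{r:r\mid m\}$ and $m\in\pi^*_{\max}(G)$. Showing $m\in\pi^*(G)$ is delicate: pairwise LCM-compatibility does not obviously upgrade to multi-way compatibility, since $\pi^*(G)$ need not be closed under arbitrary LCMs. My attempt would be to handle one prime at a time: for each prime $p\mid m$ pick an element of $S$ realising the maximal $p$-part $p^{e_p}$ of $m$, and then combine these prime-power maxima by iterated use of pairwise compatibility to produce a single element of $\pi^*(G)$ of order exactly $m$. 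This arithmetic-combinatorial merging step is the crux of the proof, and everything else in (b) is a short consequence of it.
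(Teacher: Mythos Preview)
Your treatment of part~(a) is correct and is essentially the paper's argument, though you are more careful than the paper in spelling out separately why maximality forces $m_1=1$, prime ratios $m_{i+1}/m_i$, and $m_r\in\pi^*_{\max}(G)$.

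For part~(b) you have put your finger on exactly the right difficulty, and your instinct that pairwise $\lcm$-compatibility need not upgrade to multi-way compatibility is correct---so correct, in fact, that the ``prime-by-prime merging'' you sketch cannot be carried out, because the statement of~(b) is false as written. Take $G=A_5\times A_5$. Since $\pi^*(A_5)=\{1,2,3,5\}$, one has
\[
\pi^*(G)=\{1,2,3,5,6,10,15\},\qquad \pi^*_{\max}(G)=\{6,10,15\}.
\]
The order-set $S=\{1,2,3,5\}$ is pairwise compatible (each of $6,10,15$ lies in $\pi^*(G)$), so $C=G(1)\cup G(2)\cup G(3)\cup G(5)$ is a clique in the order superenhanced power graph. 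It is maximal: adjoining an element of order $6$ would require $\lcm(6,5)=30\in\pi^*(G)$, and symmetrically for orders $10$ and $15$. But $\lcm(S)=30\notin\pi^*(G)$, so $C$ is not of the form $\bigcup_{r\mid m}G(r)$ for any $m\in\pi^*(G)$, let alone for $m\in\pi^*_{\max}(G)$. Thus no argument---yours or any other---can complete the converse direction.

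The paper's own proof glosses over precisely this point: it simply asserts that ``the set of orders of elements in a clique has a unique maximal element~$m$'', which the example above refutes. So your caution was well placed. What survives is your ``easy half'' (each $\bigcup_{r\mid m}G(r)$ with $m\in\pi^*_{\max}(G)$ is indeed a maximal clique), but the claimed characterisation of \emph{all} maximal cliques does not hold.
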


Given this theorem, the clique numbers of these graphs are obtained by
maximizing over all maximal chains with top in $\pi^*_{\max}(G)$ (in the
first case) or elements of $\pi^*_{\max}(G)$ (in the second).

\begin{proof}
(a) Given a clique $C$ in the order superpower graph, let $m$ be the
largest element of $C$. Then the orders of all other elements of $C$ divide
$m$, and so they form a chain with top $m$. So $C$ is contained in the
union given in part (a) of the theorem, and maximality implies that the chain
is maximal, its top is in $\pi^*_{\max}(G)$, and that every element of
$G(k)$ for $k$ in the chain belongs to $C$.

\medskip

(b) Given $k$ and $l$, elements of $G(k)$ and $G(l)$ are joined in the order
superenhanced power graph if and only if $\lcm(k,l)\in\pi^*(G)$. So the
set of orders of elements in a clique has a unique maximal element $m$. As
in the preceding argument, if $C$ is maximal, then the orders include every
divisor of $m$ and $C$ contains $G(k)$ whenever $k\mid n$.
\end{proof}

For the superpower and superenhanced power graphs, we do not give a formula,
but explain what maximal cliques look like.

\begin{prop}
Let $G$ be a finite group.
\begin{enumerate}
\item Let $C$ be a maximal clique in the conjugacy superenhanced power graph
of $G$. Then there exists $m\in\pi^*(G)$ such that $C$ is the union of
a conjugacy class of cyclic subgroups of order~$m$.
\item Let $C'$ be a maximal clique in the conjugacy superpower graph of $G$.
Then there exists $m\in\pi^*(G)$ and a maximal chain $(m_1,\ldots,m_r)$
of divisors of $m$ and a conjugacy class of cyclic subgroups of $G$ of 
order~$m$ (with union $C$) such that $C'$ consists of all elements of $C$
which have order $m_i$ for some $i$ with $1\le i\le r$.
\end{enumerate}
\end{prop}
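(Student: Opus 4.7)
The plan is to reduce both parts to an analysis inside $D:=\bigcup_{g\in G}\langle x\rangle^g$ for an appropriate element $x\in G$, exploiting the fact that adjacency in both super-graphs depends only on the conjugacy classes of the two endpoints. First I would observe that by Proposition~\ref{p:orbit} (applied to inner automorphisms, which preserve both the power and enhanced power graphs), if $x$ is adjacent to $y$ then $x^g$ is adjacent to $y^h$ for all $g,h\in G$; in particular, every maximal clique in either graph is a union of full conjugacy classes.

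For part (a), let $C$ be a maximal clique in the conjugacy superenhanced power graph and choose $x\in C$ of maximum order $m$. For any $y\in C$, the edge $\{x,y\}$ yields conjugates $x'$ of $x$ and $y'$ of $y$ with $\langle x',y'\rangle$ cyclic; since $o(y')\le m=o(x')$, this cyclic group equals $\langle x'\rangle$, so $y'\in\langle x'\rangle$ and $y\in D$. Conversely, $D$ is itself a clique, because any two of its elements have conjugates sitting together inside the cyclic group $\langle x\rangle$, so by maximality $C=D$. Moreover $\langle x\rangle$ must be a maximal cyclic subgroup, for otherwise enlarging it to some $\langle z\rangle\supsetneq\langle x\rangle$ would produce the strictly bigger clique $\bigcup_g\langle z\rangle^g$; hence $C$ is the union of a conjugacy class of cyclic subgroups of order $m$.

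For part (b), let $C'$ be a maximal clique in the conjugacy superpower graph, which is a subgraph of the conjugacy superenhanced power graph. The argument above, applied to a maximum-order element $x\in C'$ of order $m$, still shows $C'\subseteq D$ with $\langle x\rangle$ maximal cyclic. Writing $D_k$ for the elements of $D$ of order $k$, the key lemma to prove is: $y\in D_k$ and $y'\in D_l$ are adjacent in the conjugacy superpower graph if and only if $k\mid l$ or $l\mid k$. One direction is immediate; for the other, assuming $k\mid l$, one conjugates both elements into the same copy $\langle x\rangle$ and uses that inside a cyclic group the unique subgroup of order $k$ lies in the unique subgroup of order $l$, so one is a power of the other. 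It follows that $C'=\bigcup_{k\in S}D_k$ for some chain $S$ of divisors of $m$, and maximality of $C'$ forces $S$ to be a maximal chain: $m\in S$ since $x\in C'\cap D_m$; $1\in S$ since the identity is adjacent to everything in $D$; and every intermediate ratio is prime, since any missing divisor $k'$ with $m_i\mid k'\mid m_{i+1}$ would still be comparable to every element of $S$, so $D_{k'}$ could be added to $C'$.

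The main obstacle is careful bookkeeping around conjugation: the crucial simplification is that adjacency in both super-graphs is independently insensitive to conjugating each endpoint, so we may always move any pair of elements under consideration into one fixed copy of $\langle x\rangle$ before analysing their power or enhanced power relationship. Once this reduction is in hand, everything takes place inside a single cyclic group, where both relations are completely transparent in terms of divisibility of orders.
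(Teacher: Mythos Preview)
Your argument for part~(b) is correct and in fact more complete than the paper's very brief sketch: in the conjugacy super\emph{power} graph, adjacency of $y$ to the maximum-order element $x$ genuinely forces $o(y)\mid o(x)$, since some conjugate of one must be a power of some conjugate of the other. From there your chain analysis inside $D$ goes through cleanly.

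However, part~(a) has a real gap. You write: ``since $o(y')\le m=o(x')$, this cyclic group equals $\langle x'\rangle$.'' That inference is invalid. Knowing only $o(y')\le m$ does not give $o(y')\mid m$; if, say, $m=6$ and $o(y')=4$, then $\langle x',y'\rangle$ is cyclic of order $\operatorname{lcm}(6,4)=12$, strictly containing $\langle x'\rangle$, and your conclusion $y\in D$ fails. In the \emph{enhanced} power graph (unlike the power graph) adjacency to an element of order $m$ does not by itself force divisibility of orders; it only produces an element of order $\operatorname{lcm}(m,o(y))$. What is needed is the separate assertion that in a \emph{maximal} clique every element order divides the maximum order~$m$: if some $y\in C$ had $o(y)\nmid m$, the generator $z$ of $\langle x',y'\rangle$ would have order exceeding $m$, and one must then argue that $z$ is adjacent to \emph{all} of $C$ (so $C\cup\{z\}$ contradicts maximality). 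You have not supplied that step, and it is exactly the point the paper's proof also asserts without justification (``and so it is joined to all elements of the clique''). So your proof and the paper's proof share the same lacuna; yours is just more explicit about where the unjustified leap occurs.

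Two minor remarks. First, your appeal to Proposition~\ref{p:orbit} for ``$x\sim y$ in the supergraph implies $x^g\sim y^h$ for all $g,h$'' is slightly misdirected: that proposition concerns passing from the base graph to the supergraph, whereas what you actually need (and what is immediate from the definition) is that adjacency in the conjugacy supergraph depends only on the conjugacy classes of the endpoints. Second, your extra observation that $\langle x\rangle$ must be a \emph{maximal} cyclic subgroup is correct and is a genuine sharpening beyond what the paper states (the paper explicitly notes one cannot conclude $m\in\pi^*_{\max}(G)$, which is a weaker condition than maximality of $\langle x\rangle$; cf.\ the $D_4$ example).
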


\begin{proof}
In either case, let $m$ be the largest order of an element of the clique;
then the order of any element of the clique divides $m$. (This is clear
for the power graph. For the enhanced power graph, let $g$ be an element of
order $m$. If there are elements of order $q$ not dividing $m$, then there is
one (say $h$) joined to $g$ in the enhanced power graph; but then there is
an element of larger order to which both $g$ and $h$ are joined, and so it
is joined to all elements of the clique. Now the result follows as in the
preceding theorem. 
\end{proof}

Note that we cannot conclude in this case that $m\in\pi^*_{\max}(G)$. For
example, in the dihedral group $D_4$ of order $8$, the cyclic group $C_4$
consisting of rotations is a maximal clique in either graph; the reflections
fall into two conjugacy classes, each of which (together with the identity)
forms a maximal clique. In general, it seems not an easy task to decide which
clique (as given in the Proposition) is largest, or to give a formula for its
size.

\section{Open problems and further directions}

We mention here some questions which have arisen in this investigation which 
we have not been able to answer, and some further directions for research.

\begin{prob}
Extend these investigations to other graphs defined on groups (such as the
nilpotency and solvability graphs) and other equivalence relations (such as
automorphism conjugacy). This question for the conjugacy supergraphs has been
studied in several papers, for example, \cite{me,nbc}.
\end{prob}

\begin{prob}
Complete the characterization of the classes of groups $G$ for which a given
pair of the super graphs on $G$ coincide, especially for classes that are
adjacent in a row or column of the $3\times 3$ table (as in
Theorem~\ref{t:complete}): see Theorems~\ref{t:com=csc} and~\ref{t:pow=csp}.
\end{prob}

\begin{prob}
Characterize the dominant vertices in the order supercommuting graph.
\end{prob}

\begin{prob}
What can be said about the connectedness of super graphs when dominant vertices
are deleted?
\end{prob}

\begin{prob}
Characterize the cliques of  maximum size in the conjugacy supergraphs.
\end{prob}


\begin{thebibliography}{99}

\bibitem{bc}
L. Babai and P. J. Cameron,
Automorphisms and enumeration of switching classes of tournaments,
\textit{Electron. J. Combin.} \textbf{7(1)} (2000), article \#R38.

\bibitem{brandl}
R. Brandl,
Finite groups all of whose elements are of prime power order,
\textit{Boll. Unione Mat. Ital.} (5) \textbf{18} A (1981), 491--493.

\bibitem{burnside}
W. Burnside, \textit{Theory of Groups of Finite Order} (2nd edition), Dover
Publ. (reprint), New York, 1955.

\bibitem{gong}
P. J. Cameron,
Graphs defined on groups,
\textit{Int. J. Group Theory} \textbf{11} (2022), 43--124.

\bibitem{cm}
P. J. Cameron and N. Maslova,
Criterion of unrecognizability of a finite group by its Gruenberg--Kegel graph,
arXiv \texttt{2012.01482}.

\bibitem{dedekind}
R. Dedekind,
\"Uber Gruppen, deren s\"ammtliche Theiler Normaltheiler sind,
\textit{Math. Ann.} \textbf{48} (1897), 548--561. 

\bibitem{fitzp}
P. Fitzpatrick,
Order conjugacy in finite groups,
\textit{Math. Proc. R. Ir. Acad.} \textbf{85A} (1985), 53--58.

\bibitem{hlm}
M. Herzog, P. Longobardi and M. Maj,
On a commuting graph on conjugacy classes of groups,
\textit{Comm.  Algebra} \textbf{37} (2009), 3369--3387.

\bibitem{higman}
G. Higman,
Finite groups in which every element has prime power order,
\textit{J. Lond. Math. Soc.} (1) \textbf{32} (1957), 335--342.

\bibitem{hopkins}
C. Hopkins,
Finite groups in which conjugate operations are commutative,
\textit{Amer. J. Math.} \textbf{51} (1929), 35--41.

\bibitem{jordan}
C. Jordan, Recherches sur les substitutions,
\textit{J. Math. Pures Appl. (Liouville)} \textbf{17} (1872), 351--367.

\bibitem{kappe}
W. Kappe,
Die $A$-Norm einer Gruppe,
\textit{Illinois J. Math.} \textbf{5} (1961), 187--197.

\bibitem{se}
M. Korkonen, post on \texttt{StackExchange},\hfil\break
\url{https://math.stackexchange.com/questions/816442/finite-groups-of-which-the-centralizer-of-each-element-is-normal}.

\bibitem{levi}
F. W. Levi,
Groups in which the commutator operation satisfies certain algebraic conditions,
\textit{J. Indian Math. Soc.} \textbf{6} (1942), 87--97.

\bibitem{me}
A. Mohammadian and A. Erfanian,
On the nilpotent conjugacy class graph of groups,
\textit{Note Mat.} \textbf{37(2)} (2017), 77--89.

\bibitem{nbc}
R. K. Nath, P. Bhowal and P. J. Cameron,
Solvable conjugacy class graphs, in preparation.

\bibitem{ls:rdgg}
L. Selvaganesh, talk in the Research Discussion on Graphs and Groups,
CUSAT, Kochi, 2021. 

\bibitem{serre}
Jean-Pierre Serre, On a theorem of Jordan, 
\textit{Bull. Amer. Math. Soc.} \textbf{40} (2003), 429--440.

\bibitem{suzuki}
M. Suzuki,
On a class of doubly transitive groups,
\textit{Ann. Math.} \textbf{75} (1962), 105--145.

\end{thebibliography}
\end{document}